\DeclareRobustCommand{\stirling1}{\genfrac[]{0pt}{}}
\newtheorem{proposition}{Proposition}[section]
\newtheorem{theorem}[proposition]{Theorem}
\newtheorem{lemma}[proposition]{Lemma}
\begin{document}

\renewcommand{\thefootnote}{\fnsymbol{footnote}}
\author{{\sc Norbert Henze\footnotemark[1] and G\"unter Last\footnotemark[2]}}
\footnotetext[1]{norbert.henze@kit.edu,  
Karlsruhe Institute of Technology, Institute of Stochastics,
76131 Karlsruhe, Germany. }
\footnotetext[2]{guenter.last@kit.edu,  
Karlsruhe Institute of Technology, Institute of Stochastics,
76131 Karlsruhe, Germany. }

\title{Absent-minded passengers}

\maketitle

\begin{abstract}
\noindent
Passengers board a fully booked airplane in order. The first
passenger picks one of the seats at random.
Each subsequent passenger takes his or her assigned seat if available, otherwise takes
one of the remaining seats at random. It is well known that the last passenger obtains
 her own seat with probability $1/2$. We study the distribution of the number
of incorrectly seated passengers, and we also discuss the case of
several absent-minded passengers.
\end{abstract}

\noindent
{\bf Keywords:} 
Absent-minded passengers, combinatorial probability, 
Stirling numbers of the first kind, random permutation

\vspace{0.1cm}
\noindent
{\bf AMS MSC 2010:} 60C05, 60F05, 00A08

\section{Introduction.}

The following problem is posed and solved in \cite[p.\
  177]{Bolobas2006}, and \cite[p.\ 35]{Winkler2004}.  An airplane with
  $n\ge 2$ passengers is fully booked.  Passengers are boarding in
  chronological order, according to the numbers on their boarding
  passes. The first passenger loses his boarding pass and picks one of
  the seats at random.  Each subsequent passenger takes his or her
  seat if available, otherwise takes one of the remaining seats at
  random. The following intuitive argument shows that the last
  passenger will sit in her own seat number $n$ with probability $1/2$.
  The first absent-minded passenger chooses either his assigned seat
  number 1, or seat number $n$, or one of the seats numbered
  $2,\ldots,n-1$. In the first case, the last passenger will sit in
  her seat, and in the second case she has to sit down in seat number 1.
  In each of the remaining cases, the role of the absent-minded
  passenger will be taken over by the first passenger who finds his
  seat occupied.  In the end, everything boils down to a toss of a
  fair coin that decides whether one of the passengers chooses seat
  number 1 or seat number $n$.  The same argument shows that the last but
  one passenger will sit in his assigned seat with probability
  $2/3$. Likewise, the probability is $3/4$ for the antepenultimate
  passenger to sit in his assigned seat, etc.
In this article we study
the distribution of the number of incorrectly seated passengers
in the more general case where the first $k$ passengers are absent-minded.

We find it both useful and informative to first focus on the case $k=1$.
Even though the results are then basically known
(see \cite{Bolobas2006,Misra2008,Nigussie2014,Winkler2004}), we take
a partly new approach based on probabilistic arguments.
As long as the first seat is not occupied, there is, at each
stage of the boarding procedure, exactly
one incorrectly seated passenger in the remaining seats.
The (suitably defined) conditional distribution of
the number of this seat is uniform; see Lemma \ref{l2.2}.
This fact easily implies a certain independence property
(Proposition \ref{p2.3}) and then in turn
an explicit formula for the distribution
of the number of incorrectly seated passengers and
a central limit theorem. In Section 3 we extend these
arguments to the case where the first $k$ passengers are absent-minded.
While there is still an independence property
(Theorem \ref{p3.3}), the full distribution of the
number of incorrectly seated passengers seems to be difficult to obtain.
Still we have been able to derive explicit formulae for
mean and variance along with a central limit theorem.
Except for the mean, these results are new.

\section{One absent-minded passenger.}

First we describe our model in a neutral, more abstract language. Consider
$n$ balls numbered $1,\ldots,n$, to be placed in chronological order
in a box with $n$ numbered pits. The first ball is placed at random.
The second ball is placed in pit 2, provided this pit is empty. Otherwise
it is placed at random in one of the remaining $n-1$ pits.
This procedure continues until the $n$th ball is placed.

In this section, we derive the distribution of the random number $W_n$ (say) of balls that do not
meet their assigned pit. Moreover, we show that the limit distribution of $W_n$, when suitably normalized,
is standard normal as $n \to \infty$. Most parts of this article are based on elementary probability theory
as can be found in numerous textbooks (see, e.g., \cite{Chung2003} or \cite{Grimmet2014}).

In what follows, let $X_1$ denote the
number of the pit taken by the first ball. By definition, $X_1$ is uniformly distributed
on $\{1,\ldots,n\}$. For each  $i \in \{2,\ldots,n-1\}$, consider the state of the box
after the placement of the $i$th ball. If pit 1 is occupied, then each subsequently placed
ball matches the number of its assigned pit. In this case we define  $X_i:=1$.
If pit 1 is empty, then each of the pits $2,\ldots,i$ is occupied, as well as exactly one
further pit having number $j$ (say), where $j>i$.
 In this case we put $X_i:=j$. Finally, we define $X_n:=1$.

 To illustrate the notation, we consider the case $n=9$. If the first
 ball takes pit 4, the fourth ball takes pit 8, and ball 8 takes pit 1,
 then the balls numbered 2,3,5,6,7, and 9 find their assigned pits, and
 we have $X_1= X_2=X_3 =4$, $X_4=X_5=X_6=X_7=8$, and $X_8=X_9=1$.

 If we denote by $N_j$ the number of the pit taken by the $j$th ball,
 where $j=1,\ldots,n$, then $(N_1,\ldots,N_n)$ is a 
random permutation of $(1,\ldots,n)$, and we have
$X_i=1$ if $\min(N_1,\ldots,N_i) =1$ and $X_i = \max(N_1,\ldots,N_i)$ otherwise.
In
other words, $X_i=1$ if and only if pit 1 is occupied after the
placement of the $i$th ball. Otherwise, $X_i$ is the largest number
of the occupied pits at that stage. Thus, for example, the permutation
(2,1,3,4) of (1,2,3,4) arises if the first ball takes pit 2 and the second ball (that finds its pit occupied)
chooses pit 1. Clearly, balls 3 and 4 then find their assigned pits.

Table 1 shows the feasible
permutations $(N_1,\ldots, N_n)$ for the case $n=4$, together with
their respective probabilities.  For each permutation, the last column
displays the number of ``incorrectly placed balls," i.e., balls that do not
meet their assigned pit.  \vspace*{3mm}
\begin{center}
\begin{tabular}{|cccc|c|c|}\hline \label{tab1}
$N_1$ & $N_2$ & $N_3$ & $N_4$ & prob & $W_4$\\ \hline
 1 & 2 & 3 & 4 & $1/4$ & 0\\
 2 & 1 & 3 & 4 & $1/12$ & 2\\
 2 & 3 & 1 & 4 & $1/24$ & 3 \\
 2 & 3 & 4 & 1 & $1/24$ & 4 \\
 2 & 4 & 3 & 1 & $1/12$ & 3\\
 3 & 2 & 1 & 4 & $1/8$ & 2\\
 3 & 2 & 4 & 1 & $1/8$ & 3\\
 4 & 2 & 3 & 1 & $1/4$ & 2\\ \hline
\end{tabular}
\end{center}
\begin{center}
Table 1: The feasible permutations $(N_1,N_2,N_3,N_4)$ and their probabilities.
\end{center}

\bigskip
The model has the following simple but crucial symmetry property.

\begin{lemma}\label{l2.1} Let $i\in\{1,\ldots,n\}$. Then $X_i$ is
uniformly distributed on $\{1,i\!+\!1,\ldots,n\}$.
\end{lemma}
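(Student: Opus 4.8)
The plan is to argue by induction on $i$. Recall from the construction that, after ball $i-1$ has been placed, the state of the box is determined by $X_{i-1}$: if $X_{i-1}=1$, then pit $1$ is occupied and, as noted there, every subsequently placed ball takes its own pit; while if $X_{i-1}=j\ge i$, then pit $1$ is empty and the occupied pits are exactly $2,\ldots,i-1$ together with pit $j$. The base case $i=1$ holds by definition, since $X_1$ is uniform on $\{1,\ldots,n\}$, and the case $i=n$ is immediate, since $X_n:=1$ while $\{1,n\!+\!1,\ldots,n\}=\{1\}$.

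For the inductive step I would fix $i\in\{2,\ldots,n-1\}$ and assume $X_{i-1}$ is uniform on the $(n-i+2)$-element set $\{1,i,i\!+\!1,\ldots,n\}$. I would then describe the action of ball $i$ conditionally on $X_{i-1}$, in three cases. If $X_{i-1}=1$, ball $i$ takes its own pit and $X_i=1$. If $X_{i-1}=j$ for some $j\in\{i\!+\!1,\ldots,n\}$, then pit $i$ is still empty, ball $i$ takes it, the single displaced ball stays in pit $j$, and hence $X_i=j$. The decisive case is $X_{i-1}=i$: now ball $i$ finds pit $i$ occupied, the still-empty pits are precisely $1,i\!+\!1,\ldots,n$ (there are $n-i+1$ of them), and ball $i$ chooses one of them uniformly at random, so conditionally $X_i$ is uniform on $\{1,i\!+\!1,\ldots,n\}$.

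Combining the three cases, for every $k\in\{1,i\!+\!1,\ldots,n\}$ the conditional probability $\mathbb{P}(X_i=k\mid X_{i-1})$ equals the indicator of $\{X_{i-1}=k\}$ plus $(n-i+1)^{-1}$ times the indicator of $\{X_{i-1}=i\}$. Taking expectations and inserting the induction hypothesis $\mathbb{P}(X_{i-1}=k)=\mathbb{P}(X_{i-1}=i)=(n-i+2)^{-1}$ yields
\[
\mathbb{P}(X_i=k)=\frac{1}{n-i+2}+\frac{1}{n-i+2}\cdot\frac{1}{n-i+1}=\frac{1}{n-i+1},
\]
which, since $\{1,i\!+\!1,\ldots,n\}$ has $n-i+1$ elements, is exactly the assertion for $i$.

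I expect the only real subtlety to be the bookkeeping in the middle case: one must check that when $X_{i-1}=i$ the currently empty pits are exactly $\{1\}\cup\{i\!+\!1,\ldots,n\}$, so that ball $i$ re-randomises uniformly over these $n-i+1$ pits, and that in the other two cases $X_i$ is completely forced; granting this, what remains is the one-line computation above. One could instead condition on $X_1$ and unwind the whole construction directly, but the one-step induction seems cleaner and already exhibits the ``redistribution'' mechanism that will drive the independence property of Proposition~\ref{p2.3}.
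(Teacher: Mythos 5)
Your proof is correct and follows essentially the same route as the paper: a one-step induction in which the only nontrivial transition occurs when the "displaced" pit index equals the next ball's number, yielding the same computation $\frac{1}{n-i+2}+\frac{1}{n-i+2}\cdot\frac{1}{n-i+1}=\frac{1}{n-i+1}$ (the paper merely phrases the step as passing from $i$ to $i+1$ rather than from $i-1$ to $i$, and leaves the three-case state description implicit).
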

\begin{proof} We use induction on $i$. For $i=1$ and $i=n$
the assertion is true.  Assume that it holds for some
$i\in\{2,\ldots,n-1\}$ and let $j\in\{1,i+2,\ldots,n\}$.
Conditioning on $X_i$ and noting that $X_{i+1}=j$ implies
either $X_i=j$ or $X_i = i+1$, we obtain
\begin{align*}
{\mathbb{P}}(X_{i+1}=j)&= {\mathbb{P}}(X_i=j) {\mathbb{P}}(X_{i+1}=j|X_i=j)\\
& \qquad  + {\mathbb{P}}(X_i= i+1){\mathbb{P}}(X_{i+1}=j|X_i =i+1)\\
&= \frac{1}{n-i+1} \cdot 1 + \frac{1}{n-i+1} \cdot \frac{1}{n-i}\\
&=\frac{1}{n-i},
\end{align*}
and the proof is finished.
\end{proof}

For $i\in\{1,\ldots,n\}$, let $A_i$ denote the event
that the $i$th ball is placed in its assigned pit.
Since  $A_i=\{X_{i-1}\ne i\}$
for $i\ge 2$, Lemma \ref{l2.1} yields
\begin{align}\label{e2.3}
{\mathbb{P}}(A_i)=\frac{n-i+1}{n-i+2},\quad i=2,\ldots,n;
\end{align}
see also \cite{Nigussie2014}.  Thus, in particular,
${\mathbb{P}}(A_n)=1/2$, which is the solution to the initial problem
given in \cite{Bolobas2006} and \cite{Winkler2004}.

Incidentally, it follows by induction that there are
  $2^{n-1}$ feasible permutations. In fact, the last ball can only
  meet either its assigned pit or pit $1$, and each of these cases
  gives rise to the same number of feasible permutations
  $(N_1,\ldots,N_n)$. Indeed, if
  $N_n=n$, then the admissible assigments of balls $1,\ldots,n-1$ to
  pits $1,\ldots,n-1$ coincide with the feasible permutations in the
  case of $n-1$ pits. On the other hand, if $N_n=1$, then we are in the
  situation of $n-1$ balls and $n-1$ pits numbered
  $n,2,\ldots,n-1$. Here, pit $n$ is assigned to the first ball to be
  placed, and this ball is ``absent-minded"; see Table 1 for the case $n=4$.
Hence, the case
  $N_n=1$ also gives rise to $2^{n-1}$ feasible permutations.

Writing ${\bf 1}_A$ for the indicator function of an event $A$, let
$$
C_n:=\sum^n_{i=1}{\mathbf 1}_{A_i}
$$
denote the number of correctly placed balls. Since ${\mathbb{P}}(A_1) = 1/n$,
\eqref{e2.3} gives
\begin{align*}
{\mathbb{E}}(C_n) =\frac{1}{n}+\sum^n_{i=2}\bigg(1-\frac{1}{n-i+2}\bigg)
=\frac{1}{n}+\sum^{n-2}_{j=0}\bigg(1-\frac{1}{j+2}\bigg),
\end{align*}
so that
\begin{align}\label{e2.11}
{\mathbb{E}}(C_n)=n-H_{n-1},
\end{align}
where $H_m:=1+1/2+\cdots+1/m$, $m\in {\mathbb N}$, is the $m$th {\em harmonic number}.

Below we will show that the events $A_2,\ldots,A_n$ are independent,
a result that might come as a surprise.
To do so we need the following lemma.

\begin{lemma}\label{l2.2} Suppose that $r\in\{1,\ldots,n-2\}$
and $2\le i_1<\cdots<i_r\le n-1$. Let $i\in\{i_r+1,\ldots,n\}$.
Then, under the condition $A_{i_1}\cap\cdots\cap A_{i_r}$, the
random variable $X_i$ has a uniform distribution on $\{1,i+1,\ldots,n\}$.
\end{lemma}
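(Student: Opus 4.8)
The plan is to use that $(X_1,\ldots,X_n)$ is a (time-inhomogeneous) Markov chain with transition probabilities that can be read off the model directly. As noted in the description above, as long as pit $1$ is free after the $i$th ball the occupied pits are exactly $\{2,\ldots,i\}\cup\{X_i\}$; hence the $(i+1)$st ball takes pit $i+1$ unless $X_i=i+1$, in which case it takes a uniformly random one of the free pits $\{1,i+2,\ldots,n\}$. So, conditionally on $X_i$, the variable $X_{i+1}$ equals $X_i$ when $X_i\in\{1,i+2,\ldots,n\}$ and is uniform on $\{1,i+2,\ldots,n\}$ when $X_i=i+1$; in particular the conditional law of $(X_{i+1},\ldots,X_n)$ given $(X_1,\ldots,X_i)$ depends on $X_i$ alone. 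Moreover $A_j=\{X_{j-1}\ne j\}$ depends only on $X_{j-1}$.

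I would then prove, by induction on $r\ge0$, the slightly stronger assertion: for $2\le i_1<\cdots<i_r\le n-1$ and any $i$ with $i_r\le i\le n$ (and $1\le i\le n$ when $r=0$), the variable $X_i$ is uniform on $\{1,i+1,\ldots,n\}$ conditionally on $A_{i_1}\cap\cdots\cap A_{i_r}$; for $r=0$ this is Lemma \ref{l2.1}. For the inductive step I would put $B:=A_{i_1}\cap\cdots\cap A_{i_{r-1}}$, which lies in $\sigma(X_1,\ldots,X_{i_r-1})$ since it involves only $X_{i_1-1},\ldots,X_{i_{r-1}-1}$ and $i_{r-1}-1<i_r-1$. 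The induction hypothesis at level $r-1$, used with the index $i_r-1$ (which is $\ge i_{r-1}$), gives that conditionally on $B$ the variable $X_{i_r-1}$ is uniform on $\{1,i_r,\ldots,n\}$ — precisely its unconditional law by Lemma \ref{l2.1}. Conditioning in addition on $A_{i_r}=\{X_{i_r-1}\ne i_r\}$ deletes the atom at $i_r$, so conditionally on $B\cap A_{i_r}$ the variable $X_{i_r-1}$ is uniform on $\{1,i_r+1,\ldots,n\}$. Since $B\cap A_{i_r}$ is measurable with respect to $(X_1,\ldots,X_{i_r-1})$, the Markov property says that conditioning on $B\cap A_{i_r}$ does not change the transition out of time $i_r-1$; as the states $\{1,i_r+1,\ldots,n\}$ are all fixed by the map $X_{i_r-1}\mapsto X_{i_r}$, it follows that $X_{i_r}$ is uniform on $\{1,i_r+1,\ldots,n\}$ conditionally on $B\cap A_{i_r}$, which is its unconditional law by Lemma \ref{l2.1}. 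Finally, for $i>i_r$ we have $B\cap A_{i_r}\in\sigma(X_1,\ldots,X_{i_r})$ and the conditional law of $X_{i_r}$ agrees with its unconditional law, so the Markov property forces the conditional law of $X_i$ given $B\cap A_{i_r}$ to agree with the unconditional law of $X_i$, which is uniform on $\{1,i+1,\ldots,n\}$ by Lemma \ref{l2.1}. This closes the induction, and the lemma is the special case $r\ge1$, $i\ge i_r+1$.

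The part needing genuine care is the bookkeeping of which $\sigma$-algebra each conditioning event belongs to, so that the Markov property can legitimately be invoked to ``forget'' the earlier conditioning $B$ once we know it has not altered the law of $X_{i_r-1}$; if one is cavalier here the argument looks circular. A further subtlety is that the induction has to carry the endpoint $i=i_r$, not just $i\ge i_r+1$ as in the statement: in the step, the index $i_r-1$ may coincide with $i_{r-1}$, so the hypothesis must already handle this ``diagonal'' situation. Beyond that, everything reduces to the one-step transition computation already used for Lemma \ref{l2.1}.
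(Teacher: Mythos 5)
Your proof is correct, but it takes a genuinely different route from the paper's. The paper fixes the conditioning event and inducts on the time index $i$, redoing the one-step transition computation of Lemma \ref{l2.1} under the extra conditioning; it writes this out only for $r=1$ and declares the general case ``analogous.'' You instead induct on $r$, the number of conditioning events, and make the Markov structure of $(X_1,\ldots,X_n)$ explicit. Your key observation is that, by the inductive hypothesis applied at time $i_r-1$, conditioning on $B=A_{i_1}\cap\cdots\cap A_{i_{r-1}}$ leaves the law of $X_{i_r-1}$ unchanged; further conditioning on $A_{i_r}=\{X_{i_r-1}\ne i_r\}$ merely deletes the atom at $i_r$ and renormalizes, reproducing exactly the unconditional law of $X_{i_r}$; and the Markov property (legitimately invoked because $B\cap A_{i_r}\in\sigma(X_1,\ldots,X_{i_r-1})$) propagates this agreement to all later times. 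This buys you two things the paper's sketch does not: a complete argument for arbitrary $r$ (the ``analogous'' general case is exactly where the bookkeeping is delicate), and a conceptual explanation of Theorem \ref{p2.3} --- each $A_{i_j}$ fails to perturb the law of the chain at later times, which is precisely why the events decouple. Your strengthening of the induction to include the endpoint $i=i_r$ is indeed necessary, since $i_r-1$ may coincide with $i_{r-1}$. The only point to spell out in a final write-up is the verification that $(X_i)$ is Markov, i.e., that the set of occupied pits after ball $i$ is a function of $X_i$ alone and that a displaced ball's choice is independent of the past given that set; the paper relies on this implicitly as well.
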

\begin{proof} For ease of notation we prove the result in the case $r=1$, the
general case being 
analogous. Let $m:=i_1$. We need to show
that ${\mathbb{P}}(X_i=j| A_m)=1/(n-i+1)$ for each $i\in\{m+1,\ldots,n\}$
and each $j\in\{1,i+1,\ldots,n\}$. We proceed as in the proof of
Lemma \ref{l2.1} and use induction on $i$.

Let $j\in\{1,m+2,\ldots,n\}$. By the definition of our model and
Lemma \ref{l2.1},
\begin{align*}
\mathbb{P}(X_{m+1}=j,X_{m-1}\ne m)&=\mathbb{P}(X_{m-1}=j)+\mathbb{P}(X_{m-1}= m+1, X_{m+1}=j)\\
&=\frac{1}{n-m+2}+\mathbb{P}(X_{m-1}= m+1)\cdot\frac{1}{n-m}\\
&=\frac{1}{n-m+2}\cdot\frac{n-m+1}{n-m}.
\end{align*}
Using Lemma \ref{l2.1} again, we hence obtain
\begin{align*}
{\mathbb{P}}(X_{m+1}=j|A_m)
&=\frac{{\mathbb{P}}(X_{m+1}=j,X_{m-1}\ne m)}{{\mathbb{P}}(X_{m-1}\ne m)}=\frac{1}{n-m}.
\end{align*}
Assume now the assertion is true for each $i\in\{m+1,\ldots,n-1\}$
and take some $j\in\{1,i+2,\ldots,n\}$. Then
\begin{align*}
{\mathbb{P}}(X_{i+1}=j| A_m)& =  {\mathbb{P}}(X_i=j| A_m)+{\mathbb{P}}(X_{i+1}=j,X_i=i+1| A_m)\\
& =  \frac{1}{n-i+1} + {\mathbb{P}}(X_{i+1}=j|\{X_i=i+1\}\cap A_m){\mathbb{P}}(X_i=i|A_m)\\
& =  \frac{1}{n-i+1} +\frac{1}{n-i}\cdot\frac{1}{n-i+1}=\frac{1}{n-i}.
\end{align*}
This finishes the induction and hence the proof.
\end{proof}

The following independence property can be found (with a different proof)
as Theorem 3.5 in \cite{Lengyel2010}.

\begin{theorem}\label{p2.3}
The events $A_2,\ldots,A_n$ are independent.
\end{theorem}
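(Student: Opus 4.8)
The goal is to show $\mathbb{P}(A_{i_1}\cap\cdots\cap A_{i_s}) = \prod_{t=1}^{s}\mathbb{P}(A_{i_t})$ for all choices of indices $2\le i_1<\cdots<i_s\le n$. The natural approach is to prove something slightly stronger by a descending induction that mirrors the recursive structure already exploited in Lemma \ref{l2.2}: namely, that for any finite increasing sequence of indices the conditional probability of the largest-indexed event given all the smaller ones equals its unconditional value \eqref{e2.3}. Concretely, I would fix $2\le i_1<\cdots<i_s\le n$ and compute $\mathbb{P}(A_{i_s}\mid A_{i_1}\cap\cdots\cap A_{i_{s-1}})$; if this equals $\mathbb{P}(A_{i_s})=(n-i_s+1)/(n-i_s+2)$ regardless of the earlier indices, then peeling off one event at a time and chaining these conditional identities yields the product formula, with the base case $s=1$ being precisely \eqref{e2.3}.

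The key computation is the conditional one. Write $m:=i_{s-1}$ and $i:=i_s$, so $m<i$. Recall $A_i=\{X_{i-1}\ne i\}$ for $i\ge 2$, and also $A_n=\{X_{n-1}\ne n\}$ holds with the convention $X_n:=1$ in the sense that $X_{n-1}\in\{1,n\}$, so the event $A_n$ is still $\{X_{n-1}=1\}$. The plan is: condition on the event $B:=A_{i_1}\cap\cdots\cap A_{i_{s-1}}$, which by the hypotheses of Lemma \ref{l2.2} (with $r=s-1$ and the role of ``$i$'' there played by $i-1\ge m$) guarantees that $X_{i-1}$ is uniform on $\{1,i,i+1,\ldots,n\}$ given $B$. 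Wait — more carefully, Lemma \ref{l2.2} with the index set $\{i_1,\ldots,i_{s-1}\}$ and with $i-1\in\{i_{s-1}+1,\ldots,n\}$ (noting $i-1\ge i_{s-1}=m$; the edge case $i=m+1$ needs $i-1=m$, which is covered by a direct argument exactly as in the $r=1$ proof of Lemma \ref{l2.2}) tells us that, conditionally on $B$, the variable $X_{i-1}$ is uniform on a set of size $n-(i-1)+1=n-i+2$. Hence
\begin{align*}
\mathbb{P}(A_i\mid B)=\mathbb{P}(X_{i-1}\ne i\mid B)=1-\frac{1}{n-i+2}=\frac{n-i+1}{n-i+2}=\mathbb{P}(A_i),
\end{align*}
which is exactly what we need.

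Having the conditional identity, the induction is routine: $\mathbb{P}(A_{i_1}\cap\cdots\cap A_{i_s}) = \mathbb{P}(A_{i_s}\mid A_{i_1}\cap\cdots\cap A_{i_{s-1}})\,\mathbb{P}(A_{i_1}\cap\cdots\cap A_{i_{s-1}}) = \mathbb{P}(A_{i_s})\cdot\prod_{t=1}^{s-1}\mathbb{P}(A_{i_t})$ by the inductive hypothesis applied to the shorter sequence. Taking the index set $\{i_1,\ldots,i_s\}$ to range over all subsets of $\{2,\ldots,n\}$ gives pairwise and mutual independence of $A_2,\ldots,A_n$. The main obstacle, such as it is, is bookkeeping: one must make sure Lemma \ref{l2.2} is invoked with indices in the allowed ranges (in particular handling the boundary case $i_{s-1}=i_s-1$ where the lemma's hypothesis $i\in\{i_r+1,\ldots,n\}$ applied to $X_{i-1}$ is vacuous and must be replaced by the same one-step computation used inside the proof of Lemma \ref{l2.2}), and one should check that the case $i_s=n$ causes no trouble since $A_n=\{X_{n-1}\ne n\}$ and $X_{n-1}$ is uniform on $\{1,n\}$, so $\mathbb{P}(A_n\mid B)=1/2=\mathbb{P}(A_n)$ as required.
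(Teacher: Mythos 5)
Your proposal is correct and follows essentially the same route as the paper: peel off the largest index by computing $\mathbb{P}(A_{i_s}\mid A_{i_1}\cap\cdots\cap A_{i_{s-1}})$ via the conditional uniformity of $X_{i_s-1}$ from Lemma \ref{l2.2}, then chain with \eqref{e2.3}. You are in fact slightly more careful than the paper in flagging the boundary case of consecutive indices $i_{s-1}=i_s-1$, where Lemma \ref{l2.2} must be supplemented by the one-step computation from its own proof.
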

\begin{proof} Let $r\in\{1,\ldots,n-1\}$
and $2\le i_1<\cdots<i_r\le n-1$. By Lemma \ref{l2.2},
\begin{align*}
{\mathbb{P}}(A_{i_1}\cap\cdots \cap A_{i_r})
&={\mathbb{P}}(A_{i_r}|A_{i_1}\cap\cdots \cap A_{i_{r-1}}){\mathbb{P}}(A_{i_1}\cap\cdots \cap A_{i_{r-1}})\\
&=\frac{n-i_r+1}{n-i_r+2}\cdot{\mathbb{P}}(A_{i_1}\cap\cdots \cap A_{i_{r-1}}).
\end{align*}
Repeating this reasoning $r-1$ times and using Lemma \ref{l2.1} in the final step,
we obtain
\begin{align*}
{\mathbb{P}}(A_{i_1}\cap\cdots \cap A_{i_r})
=\frac{n-i_r+1}{n-i_r+2}\cdots \frac{n-i_1+1}{n-i_1+2},
\end{align*}
as required.
\end{proof}


Writing $A^c$ for the complement of an event $A$, let
$$
W_n := n- C_n = \sum_{i=1}^n {\mathbf 1}_{A_i^c}
$$
denote the number of balls that do not meet their assigned pit. From
Table 1, the probability distribution of $W_4$ is given by
$$
{\mathbb{P}}(W_4 =0) = \frac{6}{24}, \ {\mathbb{P}}(W_4=2) =
\frac{11}{24}, \ {\mathbb{P}}(W_4=3) = \frac{6}{24}, \
{\mathbb{P}}(W_4=4) = \frac{1}{24}.
$$
Here, the numbers $6=\stirling1{4}{1}$, $11=\stirling1{4}{2}$,
$6=\stirling1{4}{3}$, and $1=\stirling1{4}{4}$ figuring in the
respective numerators are {\em Stirling numbers of the first kind}.
For general $n\in\mathbb{N}$
and $j\in\{1,\ldots,n\}$, the Stirling number
$\stirling1{n}{j}$ is the number of permutations of $\{1,\ldots,n\}$ having
exactly $j$ cycles; see, e.g., \cite[Section 6.1]{Graham1994}. Notice that $\stirling1{n}{1} = (n-1)!$.
The next result shows that the occurrence of the Stirling numbers in the special case $n=4$ is
not a lucky coincidence; see \cite[Theorem 3.6]{Lengyel2010}.

\begin{theorem}\label{p2.5}
The distribution of $W_n$ is given by
$$
{\mathbb{P}}(W_n=0) = \frac{\stirling1{n}{1}}{n!},
\quad {\mathbb{P}}(W_n=j) = \frac{\stirling1{n}{j}}{n!}, \quad j=2,\ldots, n.
$$
\end{theorem}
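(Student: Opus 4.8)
The plan is to reduce $W_n$ to a sum of \emph{independent} Bernoulli variables and to recognise its probability generating function as the generating function of the Stirling numbers of the first kind.

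First I would set $Y_n:=\sum_{i=2}^n\mathbf{1}_{A_i^c}$, so that $W_n=\mathbf{1}_{A_1^c}+Y_n$. By Theorem \ref{p2.3} the events $A_2,\ldots,A_n$ are independent, and by \eqref{e2.3} we have ${\mathbb{P}}(A_i^c)=1/(n-i+2)$ for $i=2,\ldots,n$. Hence $Y_n$ is a sum of independent indicators, and telescoping the denominators gives its probability generating function as
\begin{align*}
{\mathbb{E}}\bigl(s^{Y_n}\bigr)=\prod_{i=2}^n\frac{n-i+1+s}{n-i+2}=\frac{1}{n!}\prod_{m=1}^{n-1}(s+m).
\end{align*}
Now I would invoke the classical identity $x(x+1)\cdots(x+n-1)=\sum_{k=0}^n\stirling1{n}{k}x^k$ (see \cite[Section 6.1]{Graham1994}); since $\stirling1{n}{0}=0$ for $n\ge 1$, dividing by $s$ yields $\prod_{m=1}^{n-1}(s+m)=\sum_{k=1}^n\stirling1{n}{k}s^{k-1}$. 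Comparing coefficients of $s^j$ then gives
\begin{align*}
{\mathbb{P}}(Y_n=j)=\frac{\stirling1{n}{j+1}}{n!},\qquad j=0,\ldots,n-1.
\end{align*}

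It remains to transfer this to $W_n$, and here the key observation is the set identity $\{Y_n=0\}=A_1$: if balls $2,\ldots,n$ all meet their assigned pits, then pit $1$ is the only one left for ball $1$, and conversely, if ball $1$ takes pit $1$ then every later ball meets its pit. On $A_1$ we therefore have $W_n=0$, while on $A_1^c$ we have $Y_n\ge 1$ and $W_n=1+Y_n\ge 2$. Consequently $\{W_n=0\}=\{Y_n=0\}$ and $\{W_n=j\}=\{Y_n=j-1\}$ for $j\ge 2$, which delivers ${\mathbb{P}}(W_n=0)=\stirling1{n}{1}/n!$ and ${\mathbb{P}}(W_n=j)=\stirling1{n}{j}/n!$ for $j=2,\ldots,n$, as claimed. (In passing, the value $j=1$ then has probability zero, consistent with the fact that a single misplaced ball is impossible.)

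The only genuinely delicate point I expect is the bookkeeping in this last step: $A_1$ is \emph{not} independent of $A_2,\ldots,A_n$ --- in fact $A_1\subseteq A_2\cap\cdots\cap A_n$ --- so $W_n$ is not a sum of $n$ independent indicators, and the identity $\{Y_n=0\}=A_1$ is exactly what is needed to account for the first ball correctly. The other ingredients (the independence of $A_2,\ldots,A_n$, formula \eqref{e2.3}, and the Stirling generating function) are available off the shelf.
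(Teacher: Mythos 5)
Your proof is correct and is essentially the paper's proof: the paper likewise isolates $\mathbf{1}_{A_1^c}$, uses the independence of $A_2,\ldots,A_n$ (Theorem \ref{p2.3}) together with ${\mathbb{P}}(A_i^c)=1/(n-i+2)$ to identify the generating function $z(z+1)\cdots(z+n-1)/n!$, and closes with the same key observation that $A_1=A_1\cap A_2\cap\cdots\cap A_n$, so that $W_n=0$ precisely when all of $A_2,\ldots,A_n$ occur. The only cosmetic difference is that the paper reaches the generating function via a records-type recursion in $n$ (equation \eqref{e2.5}) after relabelling the events, whereas your independence-based product of Bernoulli generating functions gets there in one line.
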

\begin{proof} For $i\in\{2,\ldots,n\}$ define $B_i:=A_{n-i+2}$.
Then
\begin{equation}\label{reco}
W_n = R_n - {\mathbf 1}_{A_1},
\end{equation}
where
$
R_n := 1 + \sum_{i=2}^n {\mathbf 1}_{B_i}.
$
By Theorem \ref{p2.3}, the events $B_2, \ldots, B_n$ are
independent, and we have ${\mathbb{P}}(B_i) = 1/i$, $i\in\{2,\ldots,n\}$.
We thus have, for each $n \ge 1$ and each $j \in \{2,\ldots,n\}$,
\begin{align}\label{e2.5}\notag
{\mathbb{P}}(R_{n+1}=j)&={\mathbb{P}}(\{R_{n+1}=j\}\cap B^c_{n+1})+{\mathbb{P}}(\{R_{n+1}=j\}\cap B_{n+1})\\
&=\frac{n}{n+1}{\mathbb{P}}(R_n=j)+\frac{1}{n+1}{\mathbb{P}}(R_n=j-1).
\end{align}
Hence, the {\em generating function} $f_n(z):={\mathbb{E}}(z^{R_n})$, $z\in[0,1]$, of $R_n$
satisfies the recursion
\begin{align*}
f_{n+1}(z)=\frac{n+z}{n+1}f_n(z).
\end{align*}
Since $f_1(z)=z$, it follows that
\begin{align*}
f_n(z)=\frac{z(z+1)\cdots (z+n-1)}{n!},\quad z\in[0,1].
\end{align*}
Up to the factor $1/n!$ this expression is the generating function of
the Stirling numbers of the first kind (see \cite[p.\ 252]{Graham1994}),
so that
\begin{align}\label{records}
{\mathbb{P}}(R_n =j) = \frac{\stirling1{n}{j}}{n!}, \quad j=1,\ldots, n.
\end{align}
Since, by \eqref{reco}, $R_n = W_n$ on
the event $A_1^c$ and $W_n=0$ on the event $A_1$ (since
$A_1=A_1\cap A_2 \cap \cdots \cap A_n)$, the result follows.
\end{proof}


Formula \eqref{records} gives  the distribution
of the number of cycles in a purely random permutation of
$\{1,2,\ldots,n$\} (see, e.g.,\ \cite[Section 6.2]{Blom1994}),
as well as the distribution of the number of records
in a sequence of independent and identically distributed
continuous random variables (see,
e.g., \cite[Section 9.5]{Blom1994}).
In both cases, the reason is a  recursion of the type \eqref{e2.5}.

Figure 1 shows a bar chart of the distribution of
$W_{100}$. 
The most probable value $(= 0.2112$) for the number of
incorrectly seated passengers is attained at $\ell=5$.  
Since $|W_n-R_n| \le 1$, the Lindeberg--Feller central limit theorem and
Sluzki's lemma show that the limit distribution of
$(W_n - \log n)/\sqrt{\log n}$ as $n \to \infty$ is standard normal; see, e.g.,
\cite[p.\ 383]{Billingsley1986}, and, in particular, Example 27.3.

\vspace*{5mm}

\begin{center}
\psset{xunit=.6cm,yunit=15cm}
\psset{plotpoints=300}
\begin{pspicture}(-.7,-.03)(15.5,.25)
\psaxes[ticks=none,labels=none]{->}(0,0)(-.7,0)(15,.25)
\psline(-.2,.05)(0,.05)
\psline(-.2,.1)(0,.1)
\psline(-.2,.15)(0,.15)
\psline(-.2,.2)(0,.2)
\rput(-.8,.05){$.05$}
\rput(-.8,.10){$.10$}
\rput(-.8,.15){$.15$}
\rput(-.8,.20){$.20$}
\psline(1,-.011)(1,0)
\psline(2,-.011)(2,0)
\psline(3,-.011)(3,0)
\psline(4,-.011)(4,0)
\psline(5,-.011)(5,0)
\psline(6,-.011)(6,0)
\psline(7,-.011)(7,0)
\psline(8,-.011)(8,0)
\psline(9,-.011)(9,0)
\psline(10,-.011)(10,0)
\psline(11,-.011)(11,0)
\psline(12,-.011)(12,0)
\psline(13,-.011)(13,0)
\psline(14,-.011)(14,0)
\rput(1,-.035){$0$}
\rput(2,-.035){$1$}
\rput(3,-.035){$2$}
\rput(4,-.035){$3$}
\rput(5,-.035){$4$}
\rput(6,-.035){$5$}
\rput(7,-.035){$6$}
\rput(8,-.035){$7$}
\rput(9,-.035){$8$}
\rput(10,-.035){$9$}
\rput(11,-.035){$10$}
\rput(12,-.035){$11$}
\rput(13,-.035){$13$}
\rput(14,-.035){$14$}
\psline[linewidth=2mm](1,0)(1,.01)
\psline[linewidth=2mm](3,0)(3,.05177)
\psline[linewidth=2mm](4,0)(4,.1258)
\psline[linewidth=2mm](5,0)(5,.19298)
\psline[linewidth=2mm](6,0)(6,.2112)
\psline[linewidth=2mm](7,0)(7,.1767)
\psline[linewidth=2mm](8,0)(8,.1181)
\psline[linewidth=2mm](9,0)(9,.06510)
\psline[linewidth=2mm](10,0)(10,.03024)
\psline[linewidth=2mm](11,0)(11,.01206)
\psline[linewidth=2mm](12,0)(12,.00418)
\psline[linewidth=2mm](13,0)(13,.001277)
\psline[linewidth=2mm](14,0)(14,.000346)
  \rput(2.2,.26){${\mathbb{P}}(W_{100}=\ell)$}
  \rput(15.3,-.02){$\ell$}
 \end{pspicture}
\end{center}
\begin{center}
Figure 1: Bar chart of the distribution of $W_{100}$.
\end{center}

\section{Several absent-minded passengers.}

Generalizing the problem discussed in \cite{Bolobas2006}, \cite{Nigussie2014}, and
\cite{Winkler2004}, we now consider the case when the first $k$
passengers are absent-minded, where $k \in \{1,\ldots,n-1\}$.  If
these passengers take their seats completely at random and the
subsequent passengers board the airplane according to the rule stated
in the introduction, what is the probability that the last passenger
finds his or her seat available? We will see that the answer is
$1/(k+1)$. Moreover, we will compute the expectation and variance of
and a central limit theorem for
the number of incorrectly seated passengers.

In a more neutral formulation, suppose again that we have $n$ balls to
be allocated to $n$ pits in a box.  Let $k\in\{1,\ldots,n-1\}$, and
assume that the first $k$ balls are distributed at random, each subset
of size $k$ of all $n$ pits having the same probability of being
chosen.  The $(k+1)$st ball takes pit $k+1$ provided that pit is
available.  Otherwise it chooses its position at random. This
procedure continues until the last ball is placed.

Given $i\in\{k,\ldots,n\}$, we define a random subset $Z_i$ of
$\{1,\ldots,k\}\cup \{i\!+\!1,\ldots,n\}$ as follows.  Consider the state
of the box after the placement of the $i$th ball. If $i=k$, let $Z_i$
be the set of pits occupied by the ``absent-minded balls."
If $i>k$, then all pits from $\{k+1,\ldots,i\}$ are occupied, and we write
$Z_i$ for the set of all {\em other} occupied pits.
Then $Z_i$ has $k$ elements, i.e., we have $|Z_i|=k$.

To illustrate the new notation, consider the case $n=10$, $k=3$ and the placement
\begin{center}
\begin{tabular}{c|cccccccccc}
ball no.  & 1 & 2& 3& 4& 5& 6& 7& 8& 9 & 10\\ \hline
pit no. & 7 & 3 & 9 & 4& 5& 6& 1& 8 & 10 & 2
\end{tabular} .
\end{center}
Thus, ball 1 takes pit 7, ball 2 chooses pit 3, ball 3 meets pit 9, etc.
In this case, we have $Z_3 = Z_4 = Z_5 = Z_6 = \{3,7,9\}$, $Z_7=Z_8 = \{1,3,9\}$, $Z_9=\{1,3,10\}$, and
$Z_{10} = \{1,2,3\}$.

Let $\mathcal{P}_i$ denote the system of all
sets $A\subset\{1,\ldots,k\}\cup\{i+1,\ldots,n\}$ with
$|A|=k$. The next result generalizes Lemma \ref{l2.1}.

\begin{lemma}\label{l3.1} Let $i\in\{k,\ldots,n\}$. Then the distribution of $Z_i$ is
uniform on $\mathcal{P}_i$.
\end{lemma}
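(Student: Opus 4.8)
The plan is to prove Lemma~\ref{l3.1} by induction on $i$, running from $i=k$ up to $i=n$, in direct analogy with the proof of Lemma~\ref{l2.1}. The base case $i=k$ is immediate from the model: the first $k$ balls occupy a uniformly random $k$-subset of $\{1,\ldots,n\}=\{1,\ldots,k\}\cup\{k+1,\ldots,n\}$, which is exactly the definition of $\mathcal{P}_k$. For the inductive step, suppose $Z_i$ is uniform on $\mathcal{P}_i$ for some $i\in\{k,\ldots,n-1\}$, and fix a target set $B\in\mathcal{P}_{i+1}$, i.e.\ $B\subset\{1,\ldots,k\}\cup\{i+2,\ldots,n\}$ with $|B|=k$. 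I would condition on $Z_i$ and identify which states $Z_i$ can lead to $Z_{i+1}=B$.

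The key combinatorial observation is the transition structure. When the $(i+1)$st ball is placed, pit $i+1$ is the ``special'' pit: if $i+1\notin Z_i$, then pit $i+1$ is empty, ball $i+1$ takes it, and the set of other occupied pits is unchanged, so $Z_{i+1}=Z_i$; if $i+1\in Z_i$, then pit $i+1$ is already occupied, ball $i+1$ picks uniformly among the remaining $n-i$ empty pits, and $Z_{i+1}=(Z_i\setminus\{i+1\})\cup\{L\}$ where $L$ is the uniformly chosen empty pit, which ranges over $\{1,\ldots,k\}\cup\{i+2,\ldots,n\}$ minus the $k-1$ pits already in $Z_i\setminus\{i+1\}$. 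Hence $Z_{i+1}=B$ arises in exactly two ways: either $Z_i=B$ (only possible if $i+1\notin B$, which holds since $B\in\mathcal{P}_{i+1}$), or $Z_i=(B\setminus\{\ell\})\cup\{i+1\}$ for some $\ell\in B$, in which case ball $i+1$ must select pit $\ell$ out of its $n-i$ choices. Writing $p:=1/|\mathcal{P}_i|=\big(\binom{n-i+k}{k}\big)^{-1}$ for the inductive uniform probability, I get
\begin{align*}
{\mathbb{P}}(Z_{i+1}=B)
&= {\mathbb{P}}(Z_i=B)+\sum_{\ell\in B}{\mathbb{P}}\big(Z_i=(B\setminus\{\ell\})\cup\{i+1\}\big)\cdot\frac{1}{n-i}\\
&= p + k\cdot p\cdot\frac{1}{n-i}
= p\cdot\frac{n-i+k}{n-i}.
\end{align*}
Since $|\mathcal{P}_i|=\binom{n-i+k}{k}$ and $|\mathcal{P}_{i+1}|=\binom{n-i-1+k}{k}$, a one-line check shows $p\cdot\frac{n-i+k}{n-i}=\binom{n-i+k}{k}^{-1}\cdot\frac{n-i+k}{n-i}=\binom{n-i-1+k}{k}^{-1}=1/|\mathcal{P}_{i+1}|$, so $Z_{i+1}$ is uniform on $\mathcal{P}_{i+1}$, completing the induction.

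The main obstacle, and the only place requiring genuine care, is the bookkeeping in the transition step: one must verify that when $i+1\in Z_i$ the newly chosen pit $L$ really does land in the ground set $\{1,\ldots,k\}\cup\{i+2,\ldots,n\}$ of $\mathcal{P}_{i+1}$ and that every such $B$ has exactly $k$ predecessors of the second type (one for each $\ell\in B$), with the first-type predecessor $B$ itself always lying in $\mathcal{P}_i$. This hinges on the fact that after placing ball $i+1$ the occupied pits outside $\{k+1,\ldots,i+1\}$ number exactly $k$ — essentially the same ``exactly one extra occupied pit'' invariant used implicitly for $k=1$ in Section~2 — so one should state this invariant cleanly before running the probability computation. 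Everything else is the routine arithmetic with binomial coefficients shown above.
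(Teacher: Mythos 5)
Your proposal is correct and follows essentially the same route as the paper: induction on $i$, splitting $\{Z_{i+1}=B\}$ according to whether $Z_i=B$ or $Z_i=(B\setminus\{\ell\})\cup\{i+1\}$ for some $\ell\in B$, and then the same binomial-coefficient arithmetic. The additional bookkeeping you flag (that the newly chosen pit lands in the ground set of $\mathcal{P}_{i+1}$ and that each $B$ has exactly $k$ predecessors of the second type) is a fair point of care but is handled implicitly in the paper's computation.
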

\begin{proof} We proceed by induction on $i$. For $i=k$ the
assertion holds. Suppose it is true for some $i\in\{k,\ldots,n-1\}$
and let $A\in\mathcal{P}_{i+1}$. Since $\{Z_i=A\} \subset \{Z_{i+1} =A\}$
and $Z_i\ne Z_{i+1}$ if and only if $i+1\in Z_i$, we have
\begin{align*}
{\mathbb P}(Z_{i+1}=A)&={\mathbb P}(Z_{i+1}=A = Z_i)+{\mathbb P}(Z_{i+1}=A,Z_i\neq A)\\
&={\mathbb P}(Z_i=A)+\sum_{j\in A}{\mathbb P}(Z_i=(A\setminus\{j\})\cup\{i+1\}) \cdot \frac{1}{n-i}\\
&=\binom{n-i+k}{k}^{-1}\bigg(1+\frac{k}{n-i}\bigg)
=\binom{n-i+k}{k}^{-1}\frac{n-i+k}{n-i}\\
&=\binom{n-(i+1)+k}{k}^{-1}.
\end{align*}
This finishes the induction and hence the proof.
\end{proof}


For $i\in\{1,\ldots,n\}$ again let $A_i$ denote the event
that the $i$th ball is placed in its pit.
Clearly,
\begin{align}\label{e3.0}
{\mathbb P}(A_i)= \frac{1}{n}, \quad i =1,\ldots,k.
\end{align}
Since  $A_i=\{i\notin Z_{i-1}\}$
for $i\ge k+1$, Lemma \ref{l3.1} entails
\begin{align}\label{e3.3}
{\mathbb P}(A_i)=\frac{n-i+1}{n-i+k+1},\quad i=k+1,\ldots,n,
\end{align}
and in particular ${\mathbb P}(A_n)=1/(k+1)$.

We note in passing that there are $k!(k+1)^{n-k}$
feasible permutations; see \cite[Theorem 3.9]{Lengyel2010}. The proof uses the
so-called canonical cycle representation of a permutation.
An alternative argument can be based on induction, similarly to the
case $k=1$ discussed in the paragraph after equation \eqref{e2.3}.

A conceptual proof of Lemma \ref{l3.1} uses the fact
that, after the placement of the $i$th ball, where $i>k$, each of the
places $k+1, \ldots, i$ is occupied. Therefore, the {\em other} $k$
occupied places are among the $n-i+k$ places numbered $1,\ldots,k$ and
$i+1,\ldots,n$.  By symmetry, each choice of $k$ (occupied) places
from these $n-i+k$ places has the same probability. In particular, the
probability that the ($i+1$)st ball finds its assigned pit empty
equals $(n-i)/(n-i+k)$.

Writing $W_{n,k} = \sum_{i=1}^n {\mathbf 1}_{A_i^c}$ for the number of balls that do not meet their
assigned pit, we now have
\begin{align*}
{\mathbb E}(W_{n,k})
& = \sum_{i=1}^n {\mathbb P}(A_i^c) = k \bigg(1-\frac{1}{n}\bigg)+\sum_{i=k+1}^n \frac{k}{n-i+k+1}\\
& = k \left(1 + H_{n-1} - H_k\right).
\end{align*}

The following result can be proved with the help of an analog of Lemma \ref{l2.2}.
We leave this to the reader.

\begin{theorem}\label{p3.3}
The events $A_{k+1},\ldots,A_n$ are independent.
\end{theorem}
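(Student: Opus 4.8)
The plan is to follow the two-step route that produced Theorem~\ref{p2.3}: first upgrade Lemma~\ref{l3.1} to a conditional statement — the exact analog of Lemma~\ref{l2.2} — and then telescope the probability of an intersection of the events $A_i$ exactly as in the proof of Theorem~\ref{p2.3}. Concretely, I would prove that if $r\in\{1,\ldots,n-k-1\}$, $k+1\le i_1<\cdots<i_r\le n-1$, and $i\in\{i_r+1,\ldots,n\}$, then the conditional distribution of $Z_i$ given $A_{i_1}\cap\cdots\cap A_{i_r}$ is uniform on $\mathcal{P}_i$. Granting this, the theorem is immediate: for $r\in\{1,\ldots,n-k\}$ and $k+1\le i_1<\cdots<i_r\le n$, writing $A_{i_r}=\{i_r\notin Z_{i_r-1}\}$ and using the conditional uniformity of $Z_{i_r-1}$ given $A_{i_1}\cap\cdots\cap A_{i_{r-1}}$ (or Lemma~\ref{l3.1} itself when $r=1$) gives ${\mathbb P}(A_{i_r}\mid A_{i_1}\cap\cdots\cap A_{i_{r-1}})=(n-i_r+1)/(n-i_r+k+1)$, and peeling off the factors one at a time while invoking \eqref{e3.3} at each step yields ${\mathbb P}(A_{i_1}\cap\cdots\cap A_{i_r})=\prod_{j=1}^r{\mathbb P}(A_{i_j})$.

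I would establish the conditional lemma by the bootstrap used for Lemmas~\ref{l2.2} and~\ref{l3.1}: following Lemma~\ref{l2.2}, treat only $r=1$ (set $m:=i_1$) and argue by induction on $i\ge m+1$ — it is also convenient to run an outer induction on $r$ and allow $i=i_r$ in the statement, which makes the telescoping above completely seamless even when two indices $i_j$ are consecutive. The base case $i=m+1$ requires an honest computation. Since $A_m=\{m\notin Z_{m-1}\}$ forces $Z_m=Z_{m-1}$, for fixed $A\in\mathcal{P}_{m+1}$ the event $\{Z_{m+1}=A\}\cap A_m$ is the disjoint union of $\{Z_{m-1}=A\}$, where the $(m+1)$st ball is not relocated (automatic, as $m+1\notin A$), and the $k$ events $\{Z_{m-1}=(A\setminus\{j\})\cup\{m+1\}\}$ on which that ball is relocated to pit $j$, $j\in A$, each such relocation having conditional probability $1/(n-m)$ because pit $j$ is among the $n-m$ pits free at that stage. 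As Lemma~\ref{l3.1} gives ${\mathbb P}(Z_{m-1}=B)=\binom{n-m+1+k}{k}^{-1}$ for every relevant $B$, this yields
\begin{align*}
{\mathbb P}\bigl(\{Z_{m+1}=A\}\cap A_m\bigr)=\binom{n-m+1+k}{k}^{-1}\Bigl(1+\frac{k}{n-m}\Bigr),
\end{align*}
and dividing by ${\mathbb P}(A_m)=(n-m+1)/(n-m+k+1)$ from \eqref{e3.3} collapses, via the binomial-ratio simplification already used in the proof of Lemma~\ref{l3.1}, to $\binom{n-m-1+k}{k}^{-1}=|\mathcal{P}_{m+1}|^{-1}$. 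The step from $i$ to $i+1$ is then literally the step in the proof of Lemma~\ref{l3.1}, carried out under the extra conditioning on $A_m$; this is legitimate because $A_m$ depends only on balls $1,\ldots,m-1$ whereas the transition $Z_i\to Z_{i+1}$ depends, given $Z_i$, only on the placement of ball $i+1$, so one splits $\{Z_{i+1}=A\}\cap A_m$ according to whether relocation occurred, feeds the induction hypothesis into the probabilities ${\mathbb P}(Z_i=\cdot\mid A_m)$, and multiplies by $1/(n-i)$.

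I expect the only real obstacle to be the bookkeeping in this base case — checking that $Z_m=Z_{m-1}$ on $A_m$, that each relocation target lies among the pits free at that stage, and that the $k+1$ constituent events are pairwise disjoint. Once those points are in place, the binomial identity is the same one that already appears in Lemma~\ref{l3.1}, the induction step is a verbatim copy of the one there, and the telescoping is a verbatim copy of the proof of Theorem~\ref{p2.3}; so the argument really does parallel the $k=1$ case line by line.
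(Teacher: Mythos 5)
Your proposal is correct and is precisely the route the paper intends: the paper gives no proof, remarking only that the result ``can be proved with the help of an analog of Lemma~\ref{l2.2},'' and you supply exactly that analog (conditional uniformity of $Z_i$ on $\mathcal{P}_i$ given $A_{i_1}\cap\cdots\cap A_{i_r}$) together with the same telescoping used for Theorem~\ref{p2.3}. Your base-case computation checks out, and your observation that one should also cover $i=i_r$ (so the telescoping works for consecutive indices) is a careful touch that the paper's own Lemma~\ref{l2.2} glosses over.
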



We now derive a formula for the variance of $W_{n,k}$. Since
$W_{n,k} + C_{n,k} = n$, where
$C_{n,k} = \sum_{i=1}^n {\mathbf 1}_{A_i}$ is the number of balls that
meet their assigned pit, we have
${\mathbb V}(W_{n,k}) = {\mathbb V}(C_{n,k})$. Now, $C_{n,k}$ is a sum
of indicator random variables, whence
\begin{align}\label{e3.3a} {\mathbb V}(C_{n,k}) = \sum_{i=1}^n
  {\mathbb P}(A_i)\left(1- {\mathbb P}(A_i)\right) + 2 \sum_{1 \le i <
    j \le n} \left({\mathbb P}(A_i \cap A_j) - {\mathbb
      P}(A_i){\mathbb P}(A_j)\right).
\end{align}
Using (\ref{e3.0}) and (\ref{e3.3}), the single sum equals
$$
\frac{k(n-1)}{n^2} + k(H_n-H_k) - k^2 \sum_{\ell =k+1}^n \frac{1}{\ell^2}.
$$
In view of Theorem \ref{p3.3}, only pairs $(i,j)$ satisfying
either $1 \le i < j \le k$ or $1 \le i \le k < j \le n$ make a
nonzero contribution to the double sum figuring in (\ref{e3.3a}).
Since ${\mathbb P}(A_i\cap A_j) = 1/(n(n-1))$ if $1 \le i < j \le k$,
it follows that
$$
\sum_{1 \le i < j \le k} \left({\mathbb P}(A_i \cap A_j) - {\mathbb
    P}(A_i){\mathbb P}(A_j)\right) = \binom{k}{2} \frac{1}{n^2(n-1)}.
$$
If $i \in \{1,\ldots,k\}$ and $j \in \{k+1,\ldots,n\}$, we write
${\mathbb P}(A_i \cap A_j) = {\mathbb P}(A_i){\mathbb P}(A_j|A_i)$.
Under the condition $A_i$, the situation is that of a box containing
$n-1$ pits numbered $1,\ldots,i-1,i+1,\ldots,n$, and the balls
numbered $1,\ldots,i-1,i+1,\ldots,k$ are distributed at random. By
relabelling each ball $j$, where $j>i$, with $j-1$, we can use formula
(\ref{e3.3}) with $n,k$, and $i$ replaced with $n-1,k-1$, and $j-1$,
respectively, and obtain
$$
{\mathbb P}(A_i \cap A_j) = \frac{1}{n} \cdot \frac{n-j+1}{n-j+k}, \quad j=k+1,\ldots,n.
$$
Consequently,
$$
\sum_{i=1}^k \sum_{j=k+1}^n \! \! \left({\mathbb P}(A_i\! \cap \! A_j) - {\mathbb P}(A_i){\mathbb P}(A_j)\right)
= \sum_{i=1}^k \sum_{j=k+1}^n \frac{1}{n} \left(
\frac{n\! -\! j\! +\! 1}{n\! -\! j \! + \! k} - \frac{n \! - \! j \! + \! 1}{n \! - \! j \! + \! k \! + \! 1} \right),
$$
and some algebra shows that this expression equals $\frac{k}{n}\left(H_{n-1}-H_{k-1} - 1 + k/n\right)$.
Putting everything together, straightforward calculations give
$$
{\mathbb V}(W_{n,k})
= k\Bigg[ \frac{2(1\! -\! n\! +\! kn) \! -\! n^2\! -\! k}{n^2(n-1)} + \frac{2}{nk} +
\left(\! 1\! +\! \frac{2}{n}\right)(H_n\! -\! H_k)
- k \! \sum_{\ell =k+1}^n \! \frac{1}{\ell^2} \Bigg ].
$$

We have not been able to find a closed-form expression for the distribution of $W_{n,k}$ if $k \ge 2$.
The asymptotic distribution of $W_{n,k}$ as $n \to \infty$, however, is available.
To this end, writing $a_n \sim b_n$ 
if $a_n/b_n\to 1$ as $n\to\infty$,
and using $H_n \sim \log n$, it follows
that ${\mathbb E}(W_{n,k}) \sim k \log n$ and
${\mathbb V}(W_{n,k}) \sim k \log n$. By the Lindeberg--Feller central
limit theorem, the random variable
$\sum_{i=k+1}^n {\mathbf 1}_{A_i^c}$, after standardization, has a
standard normal limit as $n \to \infty$ (see \cite[p.\ 383]{Billingsley1986}.
 Since $|W_{n,k} - \sum_{i=k+1}^n {\mathbf 1}_{A_i^c}| \le k$,
Sluzki's lemma shows that the limit distribution of
$(W_{n,k} - k \log n)/\sqrt{k \log n}$ as $n \to \infty$ is standard
normal.

\bigskip

\noindent
{\bf Acknowledgment:}
  The authors wish to thank Nicole B\"auerle for drawing their attention
  to the article \cite{Lengyel2010}.

\end{document}